\title{Generalized Beta Function Defined by Wright Function}
\author{ENES ATA \\ 
	\small Ahi Evran University, Department of Mathematics, Kırşehir, Turkey. \\ \small enesata.tr@gmail.com}
\date{}
\newtheorem{tnm}{Definition}[section]
\newtheorem{teo}[tnm]{Theorem}
\newtheorem{snc}[tnm]{Corollary}
\begin{document}
\maketitle

\begin{abstract}
The main object of this paper is to present generalizations of gamma, beta and hypergeometric functions. Some recurrence relations, transformation formulas, operation formulas and integral representations are obtained for these new generalizations.
\\

\textbf{Keywords:} Gamma function, Beta function, Wright function, Gauss hypergeometric function, Confluent hypergeometric function, Mellin transform.
\end{abstract}

\ \section{Introduction}
In recent years, some extensions of the well known special functions have been considered by several authors \cite{1,2,3,4,5,6,8,9}. In 1994, Chaudhry and Zubair \cite{1} have indroduced the following extension of gamma function

\begin{align} \label{1}
\Gamma_p (x)= \int_{0}^{\infty} t^{x-1} \ \exp{\left(-t-\frac{p}{t}\right)}dt,\quad\left( Re(p)>0\right) .
\end{align}
In 1997, Chaudhry et al. \cite{4} presented the following extension of Euler's beta function
\begin{align} \label{2}
B_p(x,y)=\int_{0}^{1} t^{x-1}(1-t)^{y-1} \exp\left(-\frac{p}{t(1-t)}\right) dt
\end{align}
$$\left(  Re(p)>0,Re(x)>0,Re(y)>0\right)$$
and they proved that this extension has connections with the Macdonald, error and Whittakers function. It clearly seems that $\Gamma_{0}(x)=\Gamma(x)$ and $B_{0}(x,y)=B(x,y)$, where $\Gamma(x)$ and $B(x,y)$ are the classical gamma and beta functions \cite{10}. Afterwards, Chaudhry et al. \cite{6} used $B_{p}(x,y)$ to extend the Gauss hypergeometric function and the confluent hypergeometric function as follows:
\begin{align} \label{3}
{}_{}\textrm{F}_{p}(a,b;c;z)= \sum_{n=0}^{\infty} (a)_n \frac{B_{p}(b+n,c-b)}{B(b,c-b)}  \frac{z^n}{n!}   
\end{align} 
$$\left( p\geq0;Re(c)>Re(b)>0\right),$$
\begin{align} \label{4}
{}_{}{\Phi}_{p}(b;c;z)= \sum_{n=0}^{\infty}\frac{B_{p}(b+n,c-b)}{B(b,c-b)}  \frac{z^n}{n!} 
\end{align}$$\left( p\geq0;Re(c)>Re(b)>0\right),$$
where $(\lambda)_{v}$ denotes the Pochhammer symbol defined by
$$(\lambda)_{0} \equiv 1~~\textnormal{and}~~ (\lambda)_{v}=\frac{\Gamma(\lambda+v)}{\Gamma(\lambda)}$$
and gave the Euler type integral representations
\begin{align*} 
{}_{}\textrm{F}_{p}(a,b;c;z)= \frac{1}{B(b, c-b)} \int_{0}^{1} t^{b-1} (1-t)^{c-b-1}(1-zt)^{-a}\ \exp\left(-\frac{p}{t(1-t)}\right) dt
\end{align*}
$$\left( p>0;p=0~\textnormal{and}~|\arg(1-z)|<\pi<p;Re(c)>Re(b)>0\right),$$
\begin{align*}
{}_{}{\Phi}_{p}(b;c;z)= \frac{1}{B(b, c-b)} \int_{0}^{1} t^{b-1} (1-t)^{c-b-1}\ \exp\left(zt-\frac{p}{t(1-t)}\right) dt
\end{align*}
$$\left( p>0;p=0~\textnormal{and}~Re(c)>Re(b)>0\right).$$
They called these functions  extended Gauss hypergeometric function (EGHF) and extended confluent hypergeometric function (ECHF), respectively. They have discussed the differentiation properties and Mellin transform of ${}_{}\textrm{F}_{p}(a,b;c;z)$ and ${}_{}{\Phi}_{p}(b;c;z)$ and obtained transformation  formulas, recurrence relations, summation and asymptotic formulas for these functions.

In this paper, we use Wright function to define new generalizations of gamma and beta functions, which defined in \cite{7} as
\begin{align*} 
{}_{0}\Psi_{1}(\alpha,\beta;z)=\sum_{n=0}^{\infty} \frac{1}{\Gamma(\alpha n + \beta)}\frac{z^n}{n!},
\end{align*} 
where $\alpha$, $\beta$ $\in\mathbb{C}$ and $Re(\alpha)>-1$.\\

We start our investigation introducing the following generalizations of gamma and beta functions as
\begin{align}\label{gm1}
{}^{\Psi}\Gamma_{p}^{(\alpha,\beta)}(x)=\int_{0}^{\infty}t^{x-1}{}_{0}\Psi_{1}\left( \alpha,\beta;-t-\frac{p}{t}\right) dt
\end{align}
$$\left( Re(x)>0,Re(\alpha)>-1,Re(p)>0\right),$$
and
\begin{align}\label{bt1}
{}^{\Psi}B_{p}^{(\alpha,\beta)}(x,y)=\int_{0}^{1}t^{x-1}(1-t)^{y-1}{}_{0}\Psi_{1}\left( \alpha,\beta;-\frac{p}{t(1-t)}\right) dt
\end{align}
$$\left( Re(x)>0,Re(y)>0,Re(\alpha)>-1,Re(p)>0\right).$$
We call the new generalizations of gamma and beta functions as $\Psi$-gamma and $\Psi$-beta functions, respectively. It is obvious that,

 $${}^{\Psi}\Gamma_{p}^{(0,2)}(x)=\Gamma_{p}(x),$$ $${}^{\Psi}\Gamma_{0}^{(0,2)}(x)=\Gamma(x),$$
and $${}^{\Psi}B_{p}^{(0,2)}(x,y)=B_{p}(x,y),$$ $${}^{\Psi}B_{0}^{(0,2)}(x,y)=B(x,y),$$ where $\Gamma_p(x), B_p(x,y)$ are generalized functions defined as \eqref{1} and \eqref{2}, respectively.

In  section 2, different integral representations and properties of $\Psi$-beta function are obtained. Additionally, relations of $\Psi$-gamma and $\Psi$-beta functions are discussed. In Section 3, we define $\Psi$-Gauss and $\Psi$-confluent hypergeometric functions, using ${}^{\Psi}B_{p}^{(\alpha,\beta)}(x,y)$ obtain the integral representations  of these $\Psi$-Gauss and $\Psi$-confluent hypergeometric functions. Furthermore we discussed the differentiation properties, Mellin transforms, transformation formulas, recurrence relations, summation formulas for these new hypergeometric functions. 

\ \section{Properties of $\Psi$-gamma and $\Psi$-beta functions}
The next theorem gives the Mellin transform representation of the function ${}^{\Psi}B_{p}^{(\alpha,\beta)}(x,y)$ in terms of the ordinary beta function and ${}^{\Psi}\Gamma^{(\alpha,\beta)}(s)$.
\begin{teo}
Mellin transform representation of the $\Psi$-beta function is given by
\begin{align*}
\mathcal{M}\left\lbrace  {}^{\Psi}B_{p}^{(\alpha,\beta)}(x,y)\right\rbrace =B(x+s,y+s){}^{\Psi}\Gamma^{(\alpha,\beta)}(s)
\end{align*}
$$\left( Re(s)>0,Re(x+s)>0,Re(y+s)>0,Re(p)>0,Re(\alpha)>-1\right).$$
\end{teo}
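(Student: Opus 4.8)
The plan is to compute the Mellin transform directly from its definition $\mathcal{M}\left[{}^{\Psi}B_{p}^{(\alpha,\beta)}(x,y)\right]=\int_{0}^{\infty}p^{s-1}\,{}^{\Psi}B_{p}^{(\alpha,\beta)}(x,y)\,dp$ and to factor it, via an interchange of the order of integration, into a one-dimensional $p$-integral times a one-dimensional $t$-integral. Substituting the defining integral \eqref{bt1} yields a double integral over $(t,p)\in(0,1)\times(0,\infty)$ whose integrand is $p^{s-1}t^{x-1}(1-t)^{y-1}\,{}_{1}\Psi_{1}\!\left(\alpha,\beta;-\frac{p}{t(1-t)}\right)$. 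First I would invoke Fubini's theorem to exchange the $dt$ and $dp$ integrations, after which the Wright function appears only in the inner integral over $p$.

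For that inner integral I would perform the change of variable $u=\dfrac{p}{t(1-t)}$, with $t$ held fixed, so that $p=u\,t(1-t)$ and $dp=t(1-t)\,du$; since $p^{s-1}=u^{s-1}[t(1-t)]^{s-1}$, this converts $\int_{0}^{\infty}p^{s-1}\,{}_{1}\Psi_{1}\!\left(\alpha,\beta;-\frac{p}{t(1-t)}\right)dp$ into $[t(1-t)]^{s}\int_{0}^{\infty}u^{s-1}\,{}_{1}\Psi_{1}(\alpha,\beta;-u)\,du$. The surviving $u$-integral is exactly the $\Psi$-gamma function with the $p/t$ term absent, namely ${}^{\Psi}\Gamma^{(\alpha,\beta)}(s)$ in the sense of \eqref{gm1} at $p=0$. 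Reinserting this constant factor leaves $\int_{0}^{1}t^{x-1}(1-t)^{y-1}[t(1-t)]^{s}\,dt=\int_{0}^{1}t^{x+s-1}(1-t)^{y+s-1}\,dt$, which is precisely $B(x+s,y+s)$, and multiplying the two pieces gives the claimed identity.

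The main obstacle will be justifying the interchange of integration, i.e. verifying the hypotheses of Fubini--Tonelli for the double integral under the stated conditions $Re(s)>0$, $Re(x+s)>0$, $Re(y+s)>0$, $Re(\alpha)>0$, $Re(\beta)>0$. This reduces to checking absolute integrability of the integrand over $(0,1)\times(0,\infty)$: I expect it to follow from the decay of the Wright function ${}_{1}\Psi_{1}(\alpha,\beta;-u)$ as $u\to\infty$ together with the integrability of the factors $t^{x+s-1}$ and $(1-t)^{y+s-1}$ near the endpoints $t=0$ and $t=1$, which is guaranteed by the positivity of $Re(x+s)$ and $Re(y+s)$. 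Once the interchange is justified, the change of variable and the recognition of the two classical integrals are entirely routine.
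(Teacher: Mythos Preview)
Your proposal is correct and follows essentially the same route as the paper: write the Mellin transform as a double integral, interchange the order of integration, substitute $u=p/[t(1-t)]$ in the inner $p$-integral to extract ${}^{\Psi}\Gamma^{(\alpha,\beta)}(s)$, and recognize the remaining $t$-integral as $B(x+s,y+s)$. The only minor difference is that the paper justifies the interchange by appealing to ``uniform convergence of the integral'' rather than Fubini--Tonelli, but your justification is, if anything, more careful.
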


\begin{proof}
Multiplying \eqref{bt1} by $p^{s-1}$ and integrating with respect to $p$ from $p=0$ to $p=\infty$, we get
\begin{align}\label{f1}
\mathcal{M}\left\lbrace  {}^{\Psi}B_{p}^{(\alpha,\beta)}(x,y)\right\rbrace  =\int_{0}^{\infty}p^{s-1}\int_{0}^{1}t^{x-1}(1-t)^{y-1}{}_{0}\Psi_{1}\left( \alpha,\beta;-\frac{p}{t(1-t)}\right) dtdp.
\end{align}
From the uniform convergence of the integral, the order of integration in \eqref{f1} can be interchanged. Therefore, we have
\begin{align}\label{f2}
\mathcal{M}\left\lbrace  {}^{\Psi}B_{p}^{(\alpha,\beta)}(x,y)\right\rbrace =\int_{0}^{1}t^{x-1}(1-t)^{y-1}\int_{0}^{\infty}p^{s-1}{}_{0}\Psi_{1}\left( \alpha,\beta;-\frac{p}{t(1-t)}\right)dpdt.
\end{align}
Now using the one-to-one transformation (except possibly at the boundaries and maps of the region onto itself) $v=\frac{p}{t(1-t)}$ in \eqref{f2}, we get,
\begin{align*}
\mathcal{M}\left\lbrace  {}^{\Psi}B_{p}^{(\alpha,\beta)}(x,y)\right\rbrace =\int_{0}^{1}t^{x+s-1}(1-t)^{y+s-1}dt\int_{0}^{\infty}v^{s-1}{}_{0}\Psi_{1}\left( \alpha,\beta;-v\right)dv.
\end{align*}
Therefore, we have
\begin{align*}
\mathcal{M}\left\lbrace  {}^{\Psi}B_{p}^{(\alpha,\beta)}(x,y)\right\rbrace =B(x+s,y+s){}^{\Psi}\Gamma^{(\alpha,\beta)}(s),
\end{align*} 
which completes the proof.
\end{proof}

\begin{snc}
By the Mellin inversion formula, we have the following complex integral representation for ${}^{\Psi}B_{p}^{(\alpha,\beta)}(x,y):$
\begin{align*}
{}^{\Psi}B_{p}^{(\alpha,\beta)}(x,y)=\frac{1}{2\pi i}\int_{-i\infty}^{+i\infty}B(x+s,y+s){}^{\Psi}\Gamma^{(\alpha,\beta)}(s)p^{-s}ds.
\end{align*}
\end{snc}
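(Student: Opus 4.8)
The plan is to obtain this representation as an immediate consequence of the Mellin transform identity established in the preceding theorem. Recall that the Mellin transform of a function $f(p)$ on $(0,\infty)$ is $\mathcal{M}[f(p)](s)=\int_{0}^{\infty}p^{s-1}f(p)\,dp$, and that whenever this transform exists in a vertical strip of the complex plane, the original function is recovered by the Mellin inversion formula $f(p)=\frac{1}{2\pi i}\int_{c-i\infty}^{c+i\infty}\mathcal{M}[f(p)](s)\,p^{-s}\,ds$, where the abscissa $c$ lies in that strip of analyticity.

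First I would identify, from the theorem, the function $f(p)={}^{\Psi}B_{p}^{(\alpha,\beta)}(x,y)$ as the one whose Mellin transform with respect to the parameter $p$ equals $B(x+s,y+s){}^{\Psi}\Gamma^{(\alpha,\beta)}(s)$. Substituting this transform directly into the inversion formula then yields
\begin{align*}
{}^{\Psi}B_{p}^{(\alpha,\beta)}(x,y)=\frac{1}{2\pi i}\int_{c-i\infty}^{c+i\infty}B(x+s,y+s){}^{\Psi}\Gamma^{(\alpha,\beta)}(s)\,p^{-s}\,ds,
\end{align*}
which is precisely the asserted complex integral representation.

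The main technical point to address is the legitimacy of the inversion: one must verify that ${}^{\Psi}B_{p}^{(\alpha,\beta)}(x,y)$ is continuous and has suitable growth and decay in $p$ so that the Mellin inversion theorem applies, and that the integrand $B(x+s,y+s){}^{\Psi}\Gamma^{(\alpha,\beta)}(s)$ is analytic along a vertical line contained in the common strip determined by the theorem's constraints $Re(s)>0$, $Re(x+s)>0$, and $Re(y+s)>0$. Under these hypotheses any admissible abscissa $c$ in that fundamental strip may be chosen, and the value of the integral is independent of the particular choice; the displayed statement corresponds to selecting the contour inside this strip.
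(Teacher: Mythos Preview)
Your proposal is correct and follows precisely the approach of the paper: the corollary is stated immediately after the Mellin transform theorem and is obtained simply by invoking the Mellin inversion formula, without any additional argument. In fact, your write-up supplies more detail than the paper itself, which records the result as a one-line consequence.
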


\begin{teo}
For the $\Psi$-beta function, we have the following integral representations:
\begin{align*}
{}^{\Psi}B_{p}^{(\alpha,\beta)}(x,y)&=2\int_{0}^{\frac{\pi}{2}} \cos^{2x-1}(\theta) \sin^{2y-1}(\theta){}_{0}\Psi_{1}\left( \alpha,\beta;-p \sec^{2}(\theta) \csc^{2}(\theta)\right)d\theta,
\\
{}^{\Psi}B_{p}^{(\alpha,\beta)}(x,y)&=\int_{0}^{\infty}\frac{u^{x-1}}{(1+u)^{x+y}}{}_{0}\Psi_{1}\left( \alpha,\beta;-2p-p\left( u+\frac{1}{u}\right) \right)du.
\end{align*}
\end{teo}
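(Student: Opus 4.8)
The plan is to derive both representations by direct changes of variable in the defining integral \eqref{bt1}, since each target formula is just the standard beta-integral substitution with the Wright-function factor carried along. Both substitutions are classical for the ordinary beta function, so the only extra work is verifying that the argument $-\frac{p}{t(1-t)}$ of ${}_{1}\Psi_{1}$ transforms into the stated expression; the beta-measure part takes care of itself exactly as in the $p=0$ case.

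For the first representation I would set $t=\cos^{2}(\theta)$, so that $1-t=\sin^{2}(\theta)$ and $dt=-2\cos(\theta)\sin(\theta)\,d\theta$, with the limits $t=0,1$ corresponding to $\theta=\tfrac{\pi}{2},0$. Then $t^{x-1}(1-t)^{y-1}\,dt$ becomes, after absorbing the factor $\cos(\theta)\sin(\theta)$ from $dt$, the expression $2\cos^{2x-1}(\theta)\sin^{2y-1}(\theta)\,d\theta$, and reversing the orientation of the limits cancels the minus sign. The argument transforms as $\frac{p}{t(1-t)}=\frac{p}{\cos^{2}(\theta)\sin^{2}(\theta)}=p\sec^{2}(\theta)\csc^{2}(\theta)$, giving the first formula directly.

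For the second representation I would use $t=\frac{u}{1+u}$, hence $1-t=\frac{1}{1+u}$ and $dt=\frac{du}{(1+u)^{2}}$, with $t=0,1$ mapping to $u=0,\infty$. Collecting the powers of $1+u$ from $t^{x-1}$, $(1-t)^{y-1}$, and $dt$ yields $\frac{u^{x-1}}{(1+u)^{x+y}}\,du$, exactly the beta-type kernel. The key algebraic step is computing $t(1-t)=\frac{u}{(1+u)^{2}}$, so that $\frac{p}{t(1-t)}=\frac{p(1+u)^{2}}{u}=p\left(u+2+\frac1u\right)=2p+p\left(u+\frac1u\right)$, which is precisely the argument appearing in the claimed identity.

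The main point requiring care is not any deep obstacle but the bookkeeping: tracking the sign and the limit reversal in the trigonometric substitution, and correctly consolidating the exponents of $1+u$ in the second substitution so that the factor from $dt$ raises the total power to $x+y$. Since the maps $t\mapsto\theta$ and $t\mapsto u$ are smooth bijections of $(0,1)$ onto $(0,\tfrac{\pi}{2})$ and $(0,\infty)$ respectively, no convergence or interchange issues arise beyond those already implicit in the well-definedness of \eqref{bt1}, and the two representations follow immediately.
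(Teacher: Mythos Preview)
Your proposal is correct and follows exactly the paper's approach: both representations are obtained by the substitutions $t=\cos^{2}(\theta)$ and $t=\dfrac{u}{1+u}$ in the defining integral \eqref{bt1}. Your write-up simply makes explicit the bookkeeping (limit reversal, Jacobian, and the identity $\frac{p}{t(1-t)}=2p+p(u+\tfrac{1}{u})$) that the paper leaves to the reader.
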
 
\begin{proof}
Letting $t= \cos^{2}(\theta)$ in \eqref{bt1}, we get
\begin{align*}
{}^{\Psi}B_{p}^{(\alpha,\beta)}(x,y)&=\int_{0}^{1}t^{x-1}(1-t)^{y-1}{}_{0}\Psi_{1}\left( \alpha,\beta;-\frac{p}{t(1-t)}\right) dt\\
&=2\int_{0}^{\frac{\pi}{2}} \cos^{2x-1}(\theta) \sin^{2y-1}(\theta){}_{0}\Psi_{1}\left( \alpha,\beta;-p \sec^{2}(\theta) \csc^{2}(\theta)\right)d\theta.
\end{align*}
On the other hand, letting $t=\frac{u}{1+u}$ in \eqref{bt1}, we get 
\begin{align*}
{}^{\Psi}B_{p}^{(\alpha,\beta)}(x,y)&=\int_{0}^{1}t^{x-1}(1-t)^{y-1}{}_{0}\Psi_{1}\left( \alpha,\beta;-\frac{p}{t(1-t)}\right) dt\\
&=\int_{0}^{\infty} \frac{u^{x-1}}{(1+u)^{x+y}}{}_{0}\Psi_{1}\left( \alpha,\beta;-2p-p\left( u+\frac{1}{u}\right) \right)du,
\end{align*}
which completes the proof.	
\end{proof}

\begin{teo}
For the $\Psi$-beta function, we have the following functional relation:
\begin{align*}
{}^{\Psi}B_{p}^{(\alpha,\beta)}(x,y+1)+{}^{\Psi}B_{p}^{(\alpha,\beta)}(x+1,y)={}^{\Psi}B_{p}^{(\alpha,\beta)}(x,y).
\end{align*}
\end{teo}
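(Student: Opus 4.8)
The plan is to establish the identity by directly combining the two integrals on the left-hand side using the defining representation \eqref{bt1}. The essential observation is that the Wright-function factor ${}_{1}\Psi_{1}\!\left( \alpha,\beta;-\frac{p}{t(1-t)}\right)$ depends on the integration variable only through the symmetric product $t(1-t)$, so it is identical in both summands regardless of whether we shift $x$ or $y$. This common kernel is exactly what makes the combination possible, and it is the feature that lets the classical beta recurrence carry over verbatim.

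First I would write out each summand from the definition:
$${}^{\Psi}B_{p}^{(\alpha,\beta)}(x,y+1)=\int_{0}^{1}t^{x-1}(1-t)^{y}\,{}_{1}\Psi_{1}\!\left( \alpha,\beta;-\frac{p}{t(1-t)}\right) dt,$$
$${}^{\Psi}B_{p}^{(\alpha,\beta)}(x+1,y)=\int_{0}^{1}t^{x}(1-t)^{y-1}\,{}_{1}\Psi_{1}\!\left( \alpha,\beta;-\frac{p}{t(1-t)}\right) dt.$$
Next I would add these and collect them under a single integral. Factoring $t^{x-1}(1-t)^{y-1}$ out of the bracketed polynomial leaves $(1-t)+t=1$, so the integrand reduces exactly to $t^{x-1}(1-t)^{y-1}\,{}_{1}\Psi_{1}\!\left( \alpha,\beta;-\frac{p}{t(1-t)}\right)$, which is precisely the integrand defining ${}^{\Psi}B_{p}^{(\alpha,\beta)}(x,y)$.

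There is no genuine analytic obstacle here: the Wright function is never expanded or differentiated, and no change of variables is needed. The only point worth checking is that adding the two integrals under one sign is legitimate, which is immediate since both converge on the stated domain and share the identical kernel. The argument is therefore a direct transcription of the classical relation $B(x,y+1)+B(x+1,y)=B(x,y)$, with the Wright-function factor carried along unchanged throughout.
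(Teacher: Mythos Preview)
Your proof is correct and follows essentially the same approach as the paper: write out both summands from the defining integral \eqref{bt1}, combine them under one integral sign, and use $(1-t)+t=1$ to recover the integrand for ${}^{\Psi}B_{p}^{(\alpha,\beta)}(x,y)$. The additional remarks you make about the shared Wright-function kernel and the legitimacy of combining the integrals are sound and simply make explicit what the paper leaves implicit.
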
 
\begin{proof}
Direct calculations yield	
\begin{align*}
{}^{\Psi}B_{p}^{(\alpha,\beta)}&(x,y+1)+{}^{\Psi}B_{p}^{(\alpha,\beta)}(x+1,y)\\
&=\int_{0}^{1}t^{x-1}(1-t)^{y}{}_{0}\Psi_{1}\left( \alpha,\beta;-\frac{p}{t(1-t)}\right) dt+\int_{0}^{1}t^{x}(1-t)^{y-1}{}_{0}\Psi_{1}\left( \alpha,\beta;-\frac{p}{t(1-t)}\right) dt\\
&=\int_{0}^{1}\left[ t^{x-1}(1-t)^{y}+t^{x}(1-t)^{y-1}\right]{}_{0}\Psi_{1}\left( \alpha,\beta;-\frac{p}{t(1-t)}\right) dt\\
&=\int_{0}^{1}t^{x-1}(1-t)^{y-1}{}_{0}\Psi_{1}\left( \alpha,\beta;-\frac{p}{t(1-t)}\right) dt\\
&={}^{\Psi}B_{p}^{(\alpha,\beta)}(x,y),
\end{align*}
hence the result.	
\end{proof}

\begin{teo}
For the product of two $\Psi$-gamma function, we have the following integral representation:
\begin{align*}
{}^{\Psi}\Gamma_{p}^{(\alpha,\beta)}(x){}^{\Psi}\Gamma_{p}^{(\alpha,\beta)}(y)&=4\int_{0}^{\frac{\pi}{2}}\int_{0}^{\infty}r^{2(x+y)-1} \cos^{2x-1}(\theta) \sin^{2y-1}(\theta)\\
&~~~\times{}_{0}\Psi_{1}\left( \alpha,\beta;-r^{2} \cos^{2}(\theta)-\frac{p}{r^{2} \cos^{2}(\theta)}\right)\\
&~~~\times{}_{0}\Psi_{1}\left( \alpha,\beta;-r^{2} \sin^{2}(\theta)-\frac{p}{r^{2} \sin^{2}(\theta)}\right)drd\theta.
\end{align*}
\end{teo}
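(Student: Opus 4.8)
The plan is to mimic the classical derivation that expresses a product of two gamma functions through polar coordinates, adapting it to the Wright-kernel setting. First I would write each factor via the defining integral \eqref{gm1}, using a separate dummy variable in each so that the product becomes a genuine double integral over the first quadrant. To symmetrize the integrand I would then substitute $t=a^{2}$ in the first integral and $t=b^{2}$ in the second; since $dt=2a\,da$ and $dt=2b\,db$, each $\Psi$-gamma factor turns into $2\int_{0}^{\infty}a^{2x-1}{}_{1}\Psi_{1}(\alpha,\beta;-a^{2}-p/a^{2})\,da$, so the product picks up the prefactor $4$ together with a product of two Wright functions whose arguments already have exactly the quadratic form appearing in the target.

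The \emph{decisive} step is the passage to polar coordinates $a=r\cos\theta$, $b=r\sin\theta$, under which the quadrant $a,b\in(0,\infty)$ corresponds to $r\in(0,\infty)$, $\theta\in(0,\pi/2)$, with Jacobian $da\,db=r\,dr\,d\theta$. Then $a^{2}=r^{2}\cos^{2}\theta$ and $b^{2}=r^{2}\sin^{2}\theta$, so the two Wright-function arguments become precisely $-r^{2}\cos^{2}\theta-p/(r^{2}\cos^{2}\theta)$ and $-r^{2}\sin^{2}\theta-p/(r^{2}\sin^{2}\theta)$. Collecting the surviving powers gives $a^{2x-1}b^{2y-1}\cdot r=r^{2(x+y)-1}\cos^{2x-1}\theta\,\sin^{2y-1}\theta$, which reproduces exactly the claimed integrand along with the overall constant $4$, completing the identification.

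The only genuine technical point is the legitimacy of turning the two one-dimensional integrals into a single double integral (and, implicitly, the validity of the change of variables). This rests on absolute convergence of the product integral on $(0,\infty)^{2}$, which for $\mathrm{Re}(x),\mathrm{Re}(y)>0$ and $\mathrm{Re}(p)>0$ follows from the decay of ${}_{1}\Psi_{1}$; I would invoke the same uniform-convergence justification already used in the proof of the Mellin transform theorem above, so that Fubini's theorem applies and each manipulation is warranted. Apart from this, the argument is purely computational, and the exponent-and-Jacobian bookkeeping in the polar step is the one place where a stray factor could intrude, so I would verify it carefully.
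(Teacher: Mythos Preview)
Your proposal is correct and follows essentially the same approach as the paper: substitute $t=\eta^{2}$ (your $a^{2}$) in each $\Psi$-gamma integral, form the double integral, and pass to polar coordinates $\eta=r\cos\theta$, $\xi=r\sin\theta$. The paper's proof omits the Fubini justification you mention, so your version is slightly more careful, but the argument is otherwise identical.
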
 
\begin{proof}
Substituting $t=\eta^{2}$ in \eqref{gm1}, we get
\begin{align*}
{}^{\Psi}\Gamma_{p}^{(\alpha,\beta)}(x)=2\int_{0}^{\infty}\eta^{2x-1}{}_{0}\Psi_{1}\left( \alpha,\beta;-\eta^{2}-\frac{p}{\eta^{2}}\right)d \eta.
\end{align*}
Therefore,
\begin{align*}
{}^{\Psi}\Gamma_{p}^{(\alpha,\beta)}(x){}^{\Psi}\Gamma_{p}^{(\alpha,\beta)}(y)&=4\int_{0}^{\infty}\int_{0}^{\infty}\eta^{2x-1}\xi^{2y-1}{}_{0}\Psi_{1}\left( \alpha,\beta;-\eta^{2}-\frac{p}{\eta^{2}}\right)
\\
&~~~\times{}_{0}\Psi_{1}\left( \alpha,\beta;-\xi^{2}-\frac{p}{\xi^{2}}\right)d\eta d\xi.
\end{align*}
Letting $\eta=r \cos(\theta)$ and $\xi=r \sin(\theta)$ in the above equality,
\begin{align*}
{}^{\Psi}\Gamma_{p}^{(\alpha,\beta)}(x){}^{\Psi}\Gamma_{p}^{(\alpha,\beta)}(y)&=4\int_{0}^{\frac{\pi}{2}}\int_{0}^{\infty}r^{2(x+y)-1} \cos^{2x-1}(\theta) \sin^{2y-1}(\theta)\\
&~~~\times{}_{0}\Psi_{1}\left( \alpha,\beta;-r^{2} \cos^{2}(\theta)-\frac{p}{r^{2} \cos^{2}(\theta)}\right)\\
&~~~\times{}_{0}\Psi_{1}\left( \alpha,\beta;-r^{2} \sin^{2}(\theta)-\frac{p}{r^{2} \sin^{2}(\theta)}\right)drd\theta,
\end{align*}
completes the proof.
\end{proof}

\begin{teo}
For the $\Psi$-beta function, we have the following summation relation:
\begin{align*}
{}^{\Psi}B_{p}^{(\alpha,\beta)}(x,1-y)=\sum_{n=0}^{\infty}\frac{(y)_{n}}{n!}{}^{\Psi}B_{p}^{(\alpha,\beta)}(x+n,1),\quad\left( Re(p)>0\right) .
\end{align*}
\end{teo}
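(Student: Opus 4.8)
The plan is to start from the integral definition \eqref{bt1}, specialize the second argument, and expand the resulting power of $(1-t)$ as a series whose coefficients are exactly the Pochhammer symbols appearing in the claim. Replacing $y$ by $1-y$ in \eqref{bt1} gives
\[
{}^{\Psi}B_{p}^{(\alpha,\beta)}(x,1-y)=\int_{0}^{1}t^{x-1}(1-t)^{-y}\,{}_{1}\Psi_{1}\left( \alpha,\beta;-\frac{p}{t(1-t)}\right) dt,
\]
so the natural move is to insert the generalized binomial expansion
\[
(1-t)^{-y}=\sum_{n=0}^{\infty}\frac{(y)_{n}}{n!}\,t^{n},\qquad 0<t<1,
\]
which is precisely the series that produces the factors $(y)_n/n!$ demanded by the right-hand side.

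Next I would substitute this expansion into the integral and interchange the order of summation and integration to obtain
\[
{}^{\Psi}B_{p}^{(\alpha,\beta)}(x,1-y)=\sum_{n=0}^{\infty}\frac{(y)_{n}}{n!}\int_{0}^{1}t^{x+n-1}\,{}_{1}\Psi_{1}\left( \alpha,\beta;-\frac{p}{t(1-t)}\right) dt.
\]
The final step is purely a matter of recognition: in each inner integral the power of $(1-t)$ is absent, i.e. the exponent $y-1$ has been replaced by $0$, so the factor $(1-t)^{0}=1$ and the integral is exactly \eqref{bt1} evaluated at the pair $(x+n,1)$. This identifies the $n$-th term as ${}^{\Psi}B_{p}^{(\alpha,\beta)}(x+n,1)$ and yields the stated summation relation at once.

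The main obstacle is justifying the term-by-term integration, since the binomial series converges only pointwise on $(0,1)$ and not uniformly up to the endpoint $t=1$. The saving feature is the strong decay of the Wright-function factor ${}_{1}\Psi_{1}\!\left(\alpha,\beta;-p/(t(1-t))\right)$ as $t\to 0^{+}$ or $t\to 1^{-}$, where the argument $-p/(t(1-t))$ tends to $-\infty$; this suppresses the integrand precisely in the region where the binomial series is slowest to converge. I would therefore argue, in line with the uniform-convergence reasoning already used for the Mellin-transform theorem above, that the partial sums are dominated by an integrable majorant on $[0,1]$, so that the interchange is legitimate for $Re(p)>0$, $Re(x)>0$, $Re(\alpha)>0$ and $Re(\beta)>1$; everything else is routine.
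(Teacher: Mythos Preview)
Your argument is correct and follows exactly the same route as the paper: write the integral for ${}^{\Psi}B_{p}^{(\alpha,\beta)}(x,1-y)$, expand $(1-t)^{-y}$ via the binomial series $\sum_{n\ge 0}(y)_n t^n/n!$, interchange summation and integration, and identify each remaining integral as ${}^{\Psi}B_{p}^{(\alpha,\beta)}(x+n,1)$. Your additional paragraph justifying the interchange via the decay of the Wright-function factor is actually more than the paper provides, which simply asserts the interchange without comment.
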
 
\begin{proof}
From the definition of the $\Psi$-beta function, we get
\begin{align*}
{}^{\Psi}B_{p}^{(\alpha,\beta)}(x,1-y)=\int_{0}^{1}t^{x-1}(1-t)^{-y}{}_{0}\Psi_{1}\left( \alpha,\beta;-\frac{p}{t(1-t)}\right) dt.
\end{align*}
Using the following binomial series expansion
\begin{align*}
(1-t)^{-y}=\sum_{n=0}^{\infty}(y)_{n}\frac{t^{n}}{n!},\quad|t|<1,
\end{align*}
we obtain
\begin{align*}
{}^{\Psi}B_{p}^{(\alpha,\beta)}(x,1-y)=\int_{0}^{1}\sum_{n=0}^{\infty}\frac{(y)_{n}}{n!}t^{x+n-1}{}_{0}\Psi_{1}\left( \alpha,\beta;-\frac{p}{t(1-t)}\right) dt.
\end{align*}
Therefore, interchanging the order of integration and summation and then using \eqref{bt1}, we obtain
\begin{align*}
{}^{\Psi}B_{p}^{(\alpha,\beta)}(x,1-y)&=\sum_{n=0}^{\infty}\frac{(y)_{n}}{n!}\int_{0}^{1}t^{x+n-1}{}_{0}\Psi_{1}\left( \alpha,\beta;-\frac{p}{t(1-t)}\right) dt\\
&=\sum_{n=0}^{\infty}\frac{(y)_{n}}{n!} {}^{\Psi}B_{p}^{(\alpha,\beta)}(x+n,1),
\end{align*}
which completes the proof.
\end{proof}

\section{$\Psi$-generalization of Gauss and confluent hypergeometric functions}
In this  section, we use the $\Psi$-beta function \eqref{bt1} to define $\Psi$-generalization of Gauss and confluent hypergeometric functions as
\begin{align*}
{}^{\Psi}F_{p}^{(\alpha,\beta)}(a,b;c;z)=\sum_{n=0}^{\infty}(a)_{n}\frac{{}^{\Psi}B_{p}^{(\alpha,\beta)}(b+n,c-b)}{B(b,c-b)}\frac{z^{n}}{n!}
\end{align*} and
\begin{align*}
{}^{\Psi}\Phi_{p}^{(\alpha,\beta)}(b;c;z)=\sum_{n=0}^{\infty}\frac{{}^{\Psi}B_{p}^{(\alpha,\beta)}(b+n,c-b)}{B(b,c-b)}\frac{z^{n}}{n!},
\end{align*} respectively.

We call ${}^{\Psi}F_{p}^{(\alpha,\beta)}(a,b;c;z)$ as $\Psi$-Gauss hypergeometric function and ${}^{\Psi}\Phi_{p}^{(\alpha,\beta)}(b;c;z)$ as $\Psi$-confluent hypergeometric function.

Observe that 
$${}^{\Psi}F_{p}^{(0,2)}(a,b;c;z)=F_{p}(a,b;c;z),$$
$${}^{\Psi}F_{0}^{(0,2)}(a,b;c;z)={}_{2}F_{1}(a,b;c;z),$$
and
$${}^{\Psi}\Phi_{p}^{(0,2)}(b;c;z)=\Phi_{p}(b;c;z),$$
$${}^{\Psi}\Phi_{0}^{(0,2)}(b;c;z)=\Phi(b;c;z),$$
where $F_p(a,b;c;z), \Phi_p(b;c;z)$ are generalized functions defined as \eqref{3} and \eqref{4}, respectively.

\subsection{Integral representations}
The $\Psi$-Gauss hypergeometric function can be provided with an integral representation by using the definition of the $\Psi$-beta function \eqref{bt1}.

\begin{teo} Let $Re(p)>0;p=0~\textnormal{and}~|\arg(1-z)|<\pi;Re(c)>Re(b)>0,$ then
\begin{align}\label{j1}
{}^{\Psi}F_{p}^{(\alpha,\beta)}(a,b;c;z)&=\frac{1}{B(b,c-b)}\int_{0}^{1}t^{b-1}(1-t)^{c-b-1}{}_{0}\Psi_{1}\left( \alpha,\beta;-\frac{p}{t(1-t)}\right)(1-zt)^{-a}dt,
\\
{}^{\Psi}F_{p}^{(\alpha,\beta)}(a,b;c;z)&=\frac{1}{B(b,c-b)}\int_{0}^{\infty}u^{b-1}(1+u)^{a-c}\left[ 1+u(1-z)\right]^{-a}\nonumber
\\
&~~~\times{}_{0}\Psi_{1}\left( \alpha,\beta;-2p-p\left( u+\frac{1}{u}\right) \right)du,\nonumber
\\
{}^{\Psi}F_{p}^{(\alpha,\beta)}(a,b;c;z)&=\frac{2}{B(b,c-b)}\int_{0}^{\frac{\pi}{2}} \sin^{2b-1}(\theta) \cos^{2c-2b-1}(\theta)\left( 1-z \sin^{2}(\theta)\right) ^{-a}\nonumber
\\
&~~~\times{}_{0}\Psi_{1}\left( \alpha,\beta;-\frac{p}{ \sin^{2}(\theta) \cos^{2}(\theta)}\right)d\theta.\nonumber
\end{align}
\end{teo}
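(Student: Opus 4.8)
The plan is to derive the first integral representation \eqref{j1} directly from the series definition of the $\Psi$-Gauss hypergeometric function, and then obtain the remaining two representations by the same two changes of variable already used in Theorem~2 for the $\Psi$-beta function.

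First I would substitute the integral definition \eqref{bt1} of the $\Psi$-beta function into the defining series, writing
\begin{align*}
{}^{\Psi}F_{p}^{(\alpha,\beta)}(a,b;c;z)=\frac{1}{B(b,c-b)}\sum_{n=0}^{\infty}(a)_{n}\frac{z^{n}}{n!}\int_{0}^{1}t^{b+n-1}(1-t)^{c-b-1}{}_{1}\Psi_{1}\left( \alpha,\beta;-\frac{p}{t(1-t)}\right)dt.
\end{align*}
Next I would interchange the order of summation and integration, pulling the factors depending on $t$ out front, so that the inner sum becomes $\sum_{n=0}^{\infty}(a)_{n}(zt)^{n}/n!$. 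Recognizing this as the generalized binomial series $(1-zt)^{-a}$, valid because $|zt|<1$ on the integration range when $|z|<1$, collapses the sum and yields exactly \eqref{j1}. The representation then extends to the stated range $|\arg(1-z)|<\pi$ by analytic continuation in $z$.

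From \eqref{j1} the other two forms follow by substitution. Setting $t=\tfrac{u}{1+u}$ gives $1-t=(1+u)^{-1}$, $dt=(1+u)^{-2}du$, and $t(1-t)=u/(1+u)^{2}$, so that $p/[t(1-t)]=2p+p(u+1/u)$; collecting the powers of $(1+u)$ from $t^{b-1}$, $(1-t)^{c-b-1}$, the Jacobian, and from $(1-zt)^{-a}=(1+u)^{a}[1+u(1-z)]^{-a}$ produces the net factor $(1+u)^{a-c}$ and hence the second representation. Likewise, putting $t=\sin^{2}(\theta)$ gives $1-t=\cos^{2}(\theta)$, $dt=2\sin(\theta)\cos(\theta)\,d\theta$, and $t(1-t)=\sin^{2}(\theta)\cos^{2}(\theta)$, which upon combining the powers yields $2\sin^{2b-1}(\theta)\cos^{2c-2b-1}(\theta)$ and the third representation.

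The main obstacle is the rigorous justification of the term-by-term integration: one must ensure the series $\sum_{n}(a)_{n}(zt)^{n}/n!$ converges uniformly against the integrable weight $t^{b-1}(1-t)^{c-b-1}{}_{1}\Psi_{1}(\alpha,\beta;-p/[t(1-t)])$ so that summation and integration commute. For $|z|<1$ this follows from uniform convergence of the binomial series on compact subsets together with the absolute integrability guaranteed by $Re(c)>Re(b)>0$ and $Re(p)>0$; the passage to the full domain $|\arg(1-z)|<\pi$ is then handled by analytic continuation, exactly as in the classical Euler representation recalled in the Introduction. The two subsequent substitutions are routine and introduce no further convergence issues.
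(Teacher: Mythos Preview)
Your proposal is correct and follows essentially the same approach as the paper: derive \eqref{j1} by inserting the integral definition of ${}^{\Psi}B_{p}^{(\alpha,\beta)}$ into the series, interchanging sum and integral, and summing the binomial series $(1-zt)^{-a}$, then obtain the other two forms via $u=t/(1-t)$ and $t=\sin^{2}(\theta)$. Your discussion of the justification for term-by-term integration and the analytic continuation to $|\arg(1-z)|<\pi$ is actually more careful than the paper, which simply writes ``Direct calculations yield'' without addressing convergence.
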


\begin{proof}
Direct calculations yield	
\begin{align*}
{}^{\Psi}F_{p}^{(\alpha,\beta)}(a,b;c;z)&=\sum_{n=0}^{\infty}(a)_{n}\frac{{}^{\Psi}B_{p}^{(\alpha,\beta)}(b+n,c-b)}{B(b,c-b)}\frac{z^{n}}{n!}\\
&=\frac{1}{B(b,c-b)}\sum_{n=0}^{\infty}(a)_{n}\int_{0}^{1}t^{b+n-1}(1-t)^{c-b-1}{}_{0}\Psi_{1}\left( \alpha,\beta;-\frac{p}{t(1-t)}\right) \frac{z^{n}}{n!}dt\\
&=\frac{1}{B(b,c-b)}\int_{0}^{1}t^{b-1}(1-t)^{c-b-1}{}_{0}\Psi_{1}\left( \alpha,\beta;-\frac{p}{t(1-t)}\right) \sum_{n=0}^{\infty}(a)_{n}\frac{(zt)^{n}}{n!}dt\\
&=\frac{1}{B(b,c-b)}\int_{0}^{1}t^{b-1}(1-t)^{c-b-1}{}_{0}\Psi_{1}\left( \alpha,\beta;-\frac{p}{t(1-t)}\right) (1-zt)^{-a}dt.
\end{align*}
Setting $u=\frac{t}{1-t}$ in \eqref{j1}, we get
\begin{align*}
{}^{\Psi}F_{p}^{(\alpha,\beta)}(a,b;c;z)=\frac{1}{B(b,c-b)}\int_{0}^{\infty}u^{b-1}(1+u)^{a-c}\left[ 1+u(1-z)\right]^{-a}{}_{0}\Psi_{1}\left( \alpha,\beta;-2p-p\left( u+\frac{1}{u}\right) \right)du.
\end{align*}
On the other hand, substituting $t= \sin^{2}(\theta)$ in \eqref{j1}, we have
\begin{align*}
{}^{\Psi}F_{p}^{(\alpha,\beta)}(a,b;c;z)&=\frac{2}{B(b,c-b)}\int_{0}^{\frac{\pi}{2}} \sin^{2b-1}(\theta) \cos^{2c-2b-1}(\theta)\left( 1-z \sin^{2}(\theta)\right) ^{-a}\\
&~~~\times{}_{0}\Psi_{1}\left( \alpha,\beta;-\frac{p}{ \sin^{2}(\theta) \cos^{2}(\theta)}\right)d\theta.\qedhere
\end{align*} 
\end{proof}

A smilar procedure yields an integral representation of the $\Psi$-confluent hypergeometric function by using the definition of the $\Psi$-beta function.

\begin{teo}
For the $\Psi$-confluent hypergeometric function, we have the following integral representations:	
\begin{align}\label{c1}
{}^{\Psi}\Phi_{p}^{(\alpha,\beta)}(b;c;z)&=\frac{1}{B(b,c-b)}\int_{0}^{1}t^{b-1}(1-t)^{c-b-1}\exp(zt){}_{0}\Psi_{1}\left( \alpha,\beta;-\frac{p}{t(1-t)}\right)dt,
\\
{}^{\Psi}\Phi_{p}^{(\alpha,\beta)}(b;c;z)&=\frac{1}{B(b,c-b)}\int_{0}^{1}u^{c-b-1}(1-u)^{b-1}\exp(z(1-u)){}_{0}\Psi_{1}\left( \alpha,\beta;-\frac{p}{u(1-u)}\right) du,\nonumber
\end{align}
$$\left(p\geq0;~\textnormal{and}~Re(c)>Re(b)>0\right).$$
\end{teo}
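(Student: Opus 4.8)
The plan is to reproduce, for the confluent case, the term-by-term argument that was just used for the $\Psi$-Gauss hypergeometric function. First I would start from the series definition
\begin{align*}
{}^{\Psi}\Phi_{p}^{(\alpha,\beta)}(b;c;z)=\sum_{n=0}^{\infty}\frac{{}^{\Psi}B_{p}^{(\alpha,\beta)}(b+n,c-b)}{B(b,c-b)}\frac{z^{n}}{n!},
\end{align*}
and replace each coefficient ${}^{\Psi}B_{p}^{(\alpha,\beta)}(b+n,c-b)$ by its integral form from \eqref{bt1}. This expresses the function as $\frac{1}{B(b,c-b)}$ times a sum of integrals over $[0,1]$ whose common integrand carries the factor $t^{b+n-1}(1-t)^{c-b-1}$ multiplied by the Wright factor ${}_{1}\Psi_{1}\!\left(\alpha,\beta;-\tfrac{p}{t(1-t)}\right)$, the only $n$-dependence residing in $t^{n}z^{n}/n!$.

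Next I would interchange summation and integration and pull the Wright factor outside the sum, so that the remaining series is $\sum_{n=0}^{\infty}(zt)^{n}/n!=e^{zt}$. Collecting terms yields exactly the first representation \eqref{c1}. For the second representation I would apply the reflection substitution $t=1-u$ in \eqref{c1}: since $t(1-t)=u(1-u)$ is invariant under this map, the argument of ${}_{1}\Psi_{1}$ is unchanged, while the monomial exponents $b-1$ and $c-b-1$ swap and the exponential becomes $e^{z(1-u)}$, producing the stated formula.

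The main obstacle is justifying the term-by-term integration, but this is precisely the uniform-convergence point already invoked in the $\Psi$-Gauss case treated immediately above. I would note that $\sum_{n\ge 0}(zt)^{n}/n!$ converges uniformly for $t\in[0,1]$ and that the Wright factor together with the integrable weight $t^{b-1}(1-t)^{c-b-1}$ provides a bound independent of the partial sums, so a dominated-convergence (or Fubini) argument licenses the interchange under the hypotheses $\operatorname{Re}(c)>\operatorname{Re}(b)>0$ and $p\ge 0$. The reflection step is a routine change of variables and requires no further conditions.
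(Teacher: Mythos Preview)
Your proposal is correct and matches the paper's own treatment: the paper does not write out a proof for this theorem but simply remarks that ``a similar procedure'' to the $\Psi$-Gauss case yields these representations, which is exactly what you carry out (series definition, substitute \eqref{bt1}, interchange sum and integral to get $e^{zt}$, then substitute $t=1-u$ for the second formula).
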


\subsection{Differentiation formulas}
In this  section, by using the formulas,
$$B(b,c-b)=\frac{c}{b}B(b+1,c-b)$$and $$(a)_{n+1}=a(a+1)_{n}$$
we obtain new formulas including derivatives of $\Psi$-Gauss hypergeometric function and $\Psi$-confluent hypergeometric function with respect to the variable $z$.

\begin{teo}
For $\Psi$-Gauss hypergeometric function, we have the following differentiation formula:
	\begin{align*}
	\frac{d^{n}}{dz^{n}}\left\lbrace {}^{\Psi}F_{p}^{(\alpha,\beta)}(a,b;c;z)\right\rbrace =\frac{(a)_{n}(b)_{n}}{(c)_{n}}\left[ {}^{\Psi}F_{p}^{(\alpha,\beta)}(a+n,b+n;c+n;z)\right].
	\end{align*}
\end{teo}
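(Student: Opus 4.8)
The plan is to establish the case $n=1$ directly from the series definition and then bootstrap to arbitrary $n$ by induction, mirroring the classical derivation of the Gauss contiguous derivative formula. The two identities singled out at the start of this subsection, namely $(a)_{n+1}=a(a+1)_{n}$ and $B(b,c-b)=\frac{c}{b}B(b+1,c-b)$, are exactly the ingredients that make the $n=1$ step go through, so the whole argument is driven by them together with elementary Pochhammer bookkeeping.

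First I would differentiate the defining series term by term,
$$\frac{d}{dz}\,{}^{\Psi}F_{p}^{(\alpha,\beta)}(a,b;c;z)=\sum_{n=1}^{\infty}(a)_{n}\frac{{}^{\Psi}B_{p}^{(\alpha,\beta)}(b+n,c-b)}{B(b,c-b)}\frac{z^{n-1}}{(n-1)!},$$
and reindex $n\mapsto n+1$ so the sum starts at $n=0$. Applying $(a)_{n+1}=a(a+1)_{n}$ to the Pochhammer factor and $\frac{1}{B(b,c-b)}=\frac{b}{c}\cdot\frac{1}{B(b+1,c-b)}$ to the normalizing constant, and noting that $c-b=(c+1)-(b+1)$ so that ${}^{\Psi}B_{p}^{(\alpha,\beta)}(b+n+1,c-b)$ is precisely the numerator occurring in the series of ${}^{\Psi}F_{p}^{(\alpha,\beta)}(a+1,b+1;c+1;z)$, the expression collapses to $\frac{ab}{c}\,{}^{\Psi}F_{p}^{(\alpha,\beta)}(a+1,b+1;c+1;z)$. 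Since $\frac{(a)_{1}(b)_{1}}{(c)_{1}}=\frac{ab}{c}$, this is the asserted formula for $n=1$.

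For the inductive step I would assume the formula for some $n$, differentiate once more, and apply the $n=1$ case to the shifted function ${}^{\Psi}F_{p}^{(\alpha,\beta)}(a+n,b+n;c+n;z)$, which contributes the factor $\frac{(a+n)(b+n)}{c+n}$. Collecting constants through $(a)_{n}(a+n)=(a)_{n+1}$, $(b)_{n}(b+n)=(b)_{n+1}$ and $(c)_{n}(c+n)=(c)_{n+1}$ then produces the statement with $n$ replaced by $n+1$. The only genuine subtlety, which term-by-term differentiation tacitly assumes, is the uniform convergence of the series on compact $z$-sets; this holds provided the Wright factor stays bounded on the integration range, so that ${}^{\Psi}B_{p}^{(\alpha,\beta)}(b+n,c-b)$ is controlled by the ordinary $B(b+n,c-b)$ and the series is majorized by that of the classical ${}_{2}F_{1}$ (in the base case $\alpha=0,\beta=2$ the factor is literally $e^{-p/(t(1-t))}\le 1$). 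Securing that convergence check is the main point to get right; the algebra afterwards is routine.
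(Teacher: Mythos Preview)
Your argument is correct and follows essentially the same route as the paper: differentiate the series term by term, shift the index, and apply the two identities $(a)_{n+1}=a(a+1)_{n}$ and $B(b,c-b)=\tfrac{c}{b}B(b+1,c-b)$ to recognize the shifted series, then iterate. The only addition is your remark on uniform convergence, which the paper leaves implicit.
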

\begin{proof}
Taking the derivative of ${}^{\Psi}F_{p}^{(\alpha,\beta)}(a,b;c;z)$ with respect to $z$, we obtain
\begin{align*}
\frac{d}{dz}\left\lbrace {}^{\Psi}F_{p}^{(\alpha,\beta)}(a,b;c;z)\right\rbrace &=\frac{d}{dz}\left\lbrace \sum_{n=0}^{\infty}(a)_{n}\frac{{}^{\Psi}B_{p}^{(\alpha,\beta)}(b+n,c-b)}{B(b,c-b)}\frac{z^{n}}{n!}\right\rbrace \\
&=\sum_{n=1}^{\infty}(a)_{n}\frac{{}^{\Psi}B_{p}^{(\alpha,\beta)}(b+n,c-b)}{B(b,c-b)}\frac{z^{n-1}}{(n-1)!}. 
\end{align*} Replacing $n\to n+1$, we get
\begin{align*}
\frac{d}{dz}\left\lbrace {}^{\Psi}F_{p}^{(\alpha,\beta)}(a,b;c;z)\right\rbrace &=\frac{(a)(b)}{(c)}\sum_{n=0}^{\infty}(a+1)_{n}\frac{{}^{\Psi}B_{p}^{(\alpha,\beta)}(b+n+1,c-b)}{B(b+1,c-b)}\frac{z^{n}}{n!}\\
&=\frac{(a)(b)}{(c)}\left[ {}^{\Psi}F_{p}^{(\alpha,\beta)}(a+1,b+1;c+1;z)\right].
\end{align*} Recursive application of this procedure gives us the general form:
\begin{align*}
\frac{d^{n}}{dz^{n}}\left\lbrace {}^{\Psi}F_{p}^{(\alpha,\beta)}(a,b;c;z)\right\rbrace =\frac{(a)_{n}(b)_{n}}{(c)_{n}}\left[ {}^{\Psi}F_{p}^{(\alpha,\beta)}(a+n,b+n;c+n;z)\right],
\end{align*}
which completes the proof.
\end{proof}

\begin{teo}
For $\Psi$-confluent hypergeometric function, we have the following differentiation formula:
\begin{align*}
\frac{d^{n}}{dz^{n}}\left\lbrace {}^{\Psi}\Phi_{p}^{(\alpha,\beta)}(b;c;z)\right\rbrace =\frac{(b)_{n}}{(c)_{n}}\left[ {}^{\Psi}\Phi_{p}^{(\alpha,\beta)}(b+n;c+n;z)\right].
\end{align*}
\end{teo}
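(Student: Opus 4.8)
The plan is to mirror the argument used for the $\Psi$-Gauss hypergeometric function in the preceding theorem, exploiting the fact that the confluent series carries no $(a)_{n}$ factor and is therefore even simpler. First I would differentiate the defining series $\sum_{n=0}^{\infty}\frac{{}^{\Psi}B_{p}^{(\alpha,\beta)}(b+n,c-b)}{B(b,c-b)}\frac{z^{n}}{n!}$ term by term with respect to $z$. Since the $n=0$ summand is constant and hence annihilated, the derivative becomes $\sum_{n=1}^{\infty}\frac{{}^{\Psi}B_{p}^{(\alpha,\beta)}(b+n,c-b)}{B(b,c-b)}\frac{z^{n-1}}{(n-1)!}$.

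Next I would shift the summation index $n\to n+1$ to recover a series starting at $n=0$, producing numerator ${}^{\Psi}B_{p}^{(\alpha,\beta)}(b+n+1,c-b)$ over denominator $B(b,c-b)$. The key algebraic step is to recognize this as a rescaled copy of the parameter-shifted function: since $(c+1)-(b+1)=c-b$, the function ${}^{\Psi}\Phi_{p}^{(\alpha,\beta)}(b+1;c+1;z)$ has the same numerator ${}^{\Psi}B_{p}^{(\alpha,\beta)}(b+n+1,c-b)$ but denominator $B(b+1,c-b)$. I would then invoke the beta identity $B(b,c-b)=\frac{c}{b}B(b+1,c-b)$ recalled at the start of this subsection to convert the ratio of normalizing constants $\frac{B(b+1,c-b)}{B(b,c-b)}$ into the factor $\frac{b}{c}$, yielding $\frac{d}{dz}\left\lbrace{}^{\Psi}\Phi_{p}^{(\alpha,\beta)}(b;c;z)\right\rbrace=\frac{b}{c}\,{}^{\Psi}\Phi_{p}^{(\alpha,\beta)}(b+1;c+1;z)$.

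Finally, iterating this single-derivative identity $n$ times telescopes the prefactors into $\frac{(b)_{n}}{(c)_{n}}$, using $(b)_{n}=b(b+1)\cdots(b+n-1)$ and the analogous product for $(c)_{n}$, while the parameters advance to $b+n$ and $c+n$; this gives the stated formula.

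The only genuine technical point, which I expect to be the main (and mild) obstacle, is justifying the interchange of $\frac{d}{dz}$ with the infinite sum. This is handled by the local uniform convergence of the power series in $z$ on compact subsets of its domain of convergence, exactly as in the $\Psi$-Gauss case treated above, so no new analysis is required.
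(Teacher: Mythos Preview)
Your proposal is correct and follows essentially the same argument as the paper: term-by-term differentiation of the defining series, the index shift $n\to n+1$, conversion of the normalizing beta quotient via $B(b,c-b)=\frac{c}{b}B(b+1,c-b)$ to extract the factor $\frac{b}{c}$, and iteration to build $\frac{(b)_n}{(c)_n}$. Your added remark on justifying the interchange of differentiation and summation is more than the paper itself provides.
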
 
\begin{proof}
Taking the derivative of ${}^{\Psi}\Phi_{p}^{(\alpha,\beta)}(b;c;z)$ with respect to $z$, we obtain
\begin{align*}
\frac{d}{dz}\left\lbrace {}^{\Psi}\Phi_{p}^{(\alpha,\beta)}(b;c;z)\right\rbrace &=\frac{d}{dz}\left\lbrace \sum_{n=0}^{\infty}\frac{{}^{\Psi}B_{p}^{(\alpha,\beta)}(b+n,c-b)}{B(b,c-b)}\frac{z^{n}}{n!}\right\rbrace \\
&=\sum_{n=1}^{\infty}\frac{{}^{\Psi}B_{p}^{(\alpha,\beta)}(b+n,c-b)}{B(b,c-b)}\frac{z^{n-1}}{(n-1)!}. 
\end{align*} Replacing $n\to n+1$, we get
\begin{align*}
\frac{d}{dz}\left\lbrace {}^{\Psi}\Phi_{p}^{(\alpha,\beta)}(b;c;z)\right\rbrace &=\frac{(b)}{(c)}\sum_{n=0}^{\infty}\frac{{}^{\Psi}B_{p}^{(\alpha,\beta)}(b+n+1,c-b)}{B(b+1,c-b)}\frac{z^{n}}{n!}\\
&=\frac{(b)}{(c)}\left[ {}^{\Psi}\Phi_{p}^{(\alpha,\beta)}(b+1;c+1;z)\right].
\end{align*} Recursive application of this procedure gives us the general form:
\begin{align*}
\frac{d^{n}}{dz^{n}}\left\lbrace {}^{\Psi}\Phi_{p}^{(\alpha,\beta)}(b;c;z)\right\rbrace =\frac{(b)_{n}}{(c)_{n}}\left[ {}^{\Psi}\Phi_{p}^{(\alpha,\beta)}(b+n;c+n;z)\right],
\end{align*}
which completes the proof.
\end{proof}

\subsection{Mellin transform representations}
In this  section, we obtain the Mellin transform representations of the $\Psi$-Gauss and $\Psi$-confluent hypergeometric functions.

\begin{teo}
For the $\Psi$-Gauss hypergeometric function, we have the following Mellin transform representation:
\begin{align*}
\mathcal{M}\left\lbrace  {}^{\Psi}F_{p}^{(\alpha,\beta)}(a,b;c;z):s\right\rbrace =\frac{{}^{\Psi}\Gamma^{(\alpha,\beta)}(s)B(b+s,c+s-b)}{B(b,c-b)}{}_{2}F_{1}(a,b+s;c+2s;z).
\end{align*}
\end{teo}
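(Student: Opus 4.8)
The plan is to compute the Mellin transform $\mathcal{M}\left[ {}^{\Psi}F_{p}^{(\alpha,\beta)}(a,b;c;z):s\right]=\int_{0}^{\infty}p^{s-1}\,{}^{\Psi}F_{p}^{(\alpha,\beta)}(a,b;c;z)\,dp$ directly from the series definition of the $\Psi$-Gauss hypergeometric function. First I would write the $\Psi$-Gauss function as its defining series and multiply by $p^{s-1}$, then interchange the Mellin integration with the summation over $n$ (justified by uniform convergence, exactly as in the proof of the first theorem). This reduces the problem to evaluating $\int_{0}^{\infty}p^{s-1}\,{}^{\Psi}B_{p}^{(\alpha,\beta)}(b+n,c-b)\,dp=\mathcal{M}\left[{}^{\Psi}B_{p}^{(\alpha,\beta)}(b+n,c-b)\right]$ term by term.

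The key input is the first theorem of the paper, which gives $\mathcal{M}\left[{}^{\Psi}B_{p}^{(\alpha,\beta)}(b+n,c-b)\right]=B(b+n+s,\,c-b+s)\,{}^{\Psi}\Gamma^{(\alpha,\beta)}(s)$. Substituting this into the summed expression yields
\begin{align*}
\mathcal{M}\left[ {}^{\Psi}F_{p}^{(\alpha,\beta)}(a,b;c;z):s\right]=\frac{{}^{\Psi}\Gamma^{(\alpha,\beta)}(s)}{B(b,c-b)}\sum_{n=0}^{\infty}(a)_{n}\,B(b+n+s,\,c-b+s)\frac{z^{n}}{n!}.
\end{align*}
The remaining task is purely to recognize this residual series as a Gauss hypergeometric function. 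I would rewrite $B(b+n+s,c-b+s)=\frac{\Gamma(b+n+s)\Gamma(c-b+s)}{\Gamma(c+n+2s)}$ and factor out the $n$-independent pieces, converting the $\Gamma$-ratios into Pochhammer symbols via $\Gamma(b+n+s)=\Gamma(b+s)(b+s)_{n}$ and $\Gamma(c+n+2s)=\Gamma(c+2s)(c+2s)_{n}$.

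After this factoring the sum becomes $B(b+s,c+s-b)\sum_{n=0}^{\infty}\frac{(a)_{n}(b+s)_{n}}{(c+2s)_{n}}\frac{z^{n}}{n!}$, and the series is exactly ${}_{2}F_{1}(a,b+s;c+2s;z)$ by its standard definition. Collecting constants gives the claimed identity. The main obstacle is not any single computation but rather the bookkeeping in the Pochhammer conversion: one must be careful that the combined shift in the bottom parameter is $c+2s$ (coming from the single $s$ in the $b$-slot plus the single $s$ in the $c-b$-slot of the beta function), which is precisely what produces the $c+2s$ appearing in the statement; getting this shift right is where an error would most likely creep in. The interchange of sum and integral is routine here, mirroring the justification already used earlier in the paper.
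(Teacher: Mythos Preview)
Your argument is correct, but it is organized differently from the paper's proof. The paper starts from the integral representation \eqref{j1}, interchanges the $p$- and $t$-integrations, and evaluates the inner integral by the substitution $u=\dfrac{p}{t(1-t)}$ to obtain $t^{s}(1-t)^{s}\,{}^{\Psi}\Gamma^{(\alpha,\beta)}(s)$; the remaining $t$-integral is then recognized as the Euler integral for ${}_{2}F_{1}(a,b+s;c+2s;z)$ multiplied by $B(b+s,c+s-b)$. You instead stay with the series definition, apply Theorem~2.1 term by term to each ${}^{\Psi}B_{p}^{(\alpha,\beta)}(b+n,c-b)$, and then recast the resulting beta-function ratio as Pochhammer symbols to recover the ${}_{2}F_{1}$ series. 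Your route is more modular---it reuses Theorem~2.1 as a black box rather than repeating the substitution---while the paper's route avoids the Pochhammer bookkeeping by landing directly on the Euler integral. Either way leads to the same identity, and your remark that the double shift $c\mapsto c+2s$ is the place to watch is exactly the point that the Euler-integral approach handles automatically.
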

\begin{proof}
To obtain the Mellin transform, we multiply both sides of \eqref{j1} by $p^{s-1}$ and integrate with respect to $p$ over the interval $[0,\infty).$ Thus, we get
\begin{align*}
	\mathcal{M}\big\lbrace {}^{\Psi}F_{p}^{(\alpha,\beta)}&(a,b;c;z):s\big\rbrace
	\\
	&=\int_{0}^{\infty}p^{s-1} ~{}^{\Psi}F_{p}^{(\alpha,\beta)}(a,b;c;z)dp \\
	&=\int_{0}^{\infty}p^{s-1}\sum_{n=0}^{\infty}\frac{{}^{\Psi}B_{p}^{(\alpha,\beta)}(b+n,c-b)}{B(b,c-b)}(a)_{n}\frac{z^{n}}{n!}dp\\
	&=\frac{1}{B(b,c-b)}\int_{0}^{1}t^{b-1}(1-t)^{c-b-1}(1-zt)^{-a}\int_{0}^{\infty}p^{s-1}{}_{0}\Psi_{1}\left( \alpha,\beta;-\frac{p}{t(1-t)}\right)dpdt.\\
	\end{align*}
Since substituting $u=\frac{p}{t(1-t)}$	in the above equation,
\begin{align*}
\int_{0}^{\infty}p^{s-1}{}_{0}\Psi_{1}\left( \alpha,\beta;-\frac{p}{t(1-t)}\right)dp=t^{s}(1-t)^{s}{}^{\Psi}\Gamma^{(\alpha,\beta)}(s).
\end{align*}
Thus, we get 
\begin{align*}
\mathcal{M}\left\lbrace  {}^{\Psi}F_{p}^{(\alpha,\beta)}(a,b;c;z):s\right\rbrace =\frac{{}^{\Psi}\Gamma^{(\alpha,\beta)}(s)B(b+s,c+s-b)}{B(b,c-b)}{}_{2}F_{1}(a,b+s;c+2s;z).~~~~~~~~~~~~~~~~~~~~~~~~\qedhere
\end{align*}
\end{proof}

\begin{snc}
By the Mellin inversion formula, we have the following complex integral representation for ${}^{\Psi}F_{p}^{(\alpha,\beta)}(a,b;c;z)$:
\begin{align*}
{}^{\Psi}F_{p}^{(\alpha,\beta)}(a,b;c;z)=\frac{1}{2\pi i}\int_{-i\infty}^{+i\infty}\frac{{}^{\Psi}\Gamma^{(\alpha,\beta)}(s)B(b+s,c+s-b)}{B(b,c-b)}{}_{2}F_{1}(a,b+s;c+2s;z)p^{-s}ds.
\end{align*}
\end{snc}

\begin{teo}
For the $\Psi$-confluent hypergeometric function, we have the following Mellin transform representation:
	\begin{align*}
	\mathcal{M}\left\lbrace  {}^{\Psi}\Phi_{p}^{(\alpha,\beta)}(b;c;z):s\right\rbrace =\frac{{}^{\Psi}\Gamma^{(\alpha,\beta)}(s)B(b+s,c+s-b)}{B(b,c-b)}\Phi(b+s;c+2s;z).
	\end{align*}
\end{teo}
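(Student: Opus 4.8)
The plan is to mirror the argument already used for the Mellin transform of ${}^{\Psi}F_{p}^{(\alpha,\beta)}$, this time starting from the Euler-type integral representation \eqref{c1} of the $\Psi$-confluent hypergeometric function. First I would multiply \eqref{c1} by $p^{s-1}$ and integrate over $p\in[0,\infty)$; invoking the uniform convergence of the integral (exactly as in the Mellin-transform computation for the $\Psi$-beta function) I would interchange the order of the $t$- and $p$-integrations, pulling the $p$-integral inside so that only the factor ${}_{1}\Psi_{1}\left(\alpha,\beta;-\tfrac{p}{t(1-t)}\right)$ depends on $p$.

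Next I would evaluate the inner $p$-integral by the same one-to-one substitution $u=\frac{p}{t(1-t)}$ used in the Gauss case, giving
$$\int_{0}^{\infty}p^{s-1}{}_{1}\Psi_{1}\left(\alpha,\beta;-\frac{p}{t(1-t)}\right)dp=t^{s}(1-t)^{s}\,{}^{\Psi}\Gamma^{(\alpha,\beta)}(s).$$
After substituting this back, the computation reduces to the single real integral
$$\frac{{}^{\Psi}\Gamma^{(\alpha,\beta)}(s)}{B(b,c-b)}\int_{0}^{1}t^{b+s-1}(1-t)^{c+s-b-1}e^{zt}\,dt.$$

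Finally I would identify the remaining integral with the classical confluent hypergeometric function via its Euler representation $\Phi(\tilde b;\tilde c;z)=\frac{1}{B(\tilde b,\tilde c-\tilde b)}\int_{0}^{1}t^{\tilde b-1}(1-t)^{\tilde c-\tilde b-1}e^{zt}\,dt$. Choosing $\tilde b=b+s$ and $\tilde c-\tilde b=c+s-b$ forces $\tilde c=c+2s$, so the integral equals $B(b+s,c+s-b)\,\Phi(b+s;c+2s;z)$, and collecting the constants produces the asserted formula.

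The step demanding the most care is this last identification: one must track that the second Beta-argument $c+s-b$ is exactly $\tilde c-\tilde b$, which is what forces the third slot of $\Phi$ to be $c+2s$ (and not $c+s$)---the parameters $b$ and $c$ each shift by $s$ inside the integrand exponents, but the upper parameter of $\Phi$ absorbs both shifts. The only genuine analytic point, namely the legitimacy of interchanging the integrations, is inherited verbatim from the earlier $\Psi$-beta computation under the hypotheses $Re(s)>0$, $Re(p)>0$ and $Re(c)>Re(b)>0$.
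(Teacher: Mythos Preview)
Your proposal is correct and follows essentially the same route as the paper's own proof: multiply \eqref{c1} by $p^{s-1}$, integrate over $p\in[0,\infty)$, swap the order of integration, evaluate the inner $p$-integral via the substitution $u=\frac{p}{t(1-t)}$, and then recognize the remaining $t$-integral. If anything, your write-up is more explicit than the paper's, which simply writes ``Thus, we get'' after the substitution step without spelling out the identification of the $t$-integral with $B(b+s,c+s-b)\,\Phi(b+s;c+2s;z)$.
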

\begin{proof}
To obtain the Mellin transform, we multiply both sides of \eqref{c1} by $p^{s-1}$ and integrate with respect to $p$ over the interval $[0,\infty).$ Thus, we get
\begin{align*}
\mathcal{M}\big\lbrace  {}^{\Psi}\Phi_{p}^{(\alpha,\beta)}&(b;c;z):s\big\rbrace
\\
&=\int_{0}^{\infty}p^{s-1}~ {}^{\Psi}\Phi_{p}^{(\alpha,\beta)}(b;c;z)dp
\\
&=\int_{0}^{\infty}p^{s-1}\sum_{n=0}^{\infty}\frac{{}^{\Psi}B_{p}^{(\alpha,\beta)}(b+n,c-b)}{B(b,c-b)}\frac{z^{n}}{n!}dp
\\
&=\frac{1}{B(b,c-b)}\int_{0}^{1}t^{b-1}(1-t)^{c-b-1}\exp(zt)\int_{0}^{\infty}p^{s-1}{}_{0}\Psi_{1}\left(\alpha,\beta;-\frac{p}{t(1-t)}\right) dp dt.
\end{align*} Since substituting $u=\frac{p}{t(1-t)}$ in the above equation,
\begin{align*}
\int_{0}^{\infty}p^{s-1}{}_{0}\Psi_{1}\left( \alpha,\beta;-\frac{p}{t(1-t)}\right)dp=t^{s}(1-t)^{s}{}^{\Psi}\Gamma^{(\alpha,\beta)}(s).
\end{align*} 
Thus, we get
\begin{align*}
\mathcal{M}\left\lbrace  {}^{\Psi}\Phi_{p}^{(\alpha,\beta)}(b;c;z):s\right\rbrace =\frac{{}^{\Psi}\Gamma^{(\alpha,\beta)}(s)B(b+s,c+s-b)}{B(b,c-b)}\Phi(b+s;c+2s;z).~~~~~~~~~~~~~~~~~~~~~~~~~~~~~~~~~\qedhere
\end{align*}
\end{proof}

\begin{snc}
By the Mellin inversion formula, we have the following complex integral representation for ${}^{\Psi}\Phi_{p}^{(\alpha,\beta)}(b;c;z)$:
\begin{align*}
{}^{\Psi}\Phi_{p}^{(\alpha,\beta)}(b;c;z)=\frac{1}{2\pi i}\int_{-i\infty}^{+i\infty}\frac{{}^{\Psi}\Gamma^{(\alpha,\beta)}(s)B(b+s,c+s-b)}{B(b,c-b)}\Phi(b+s;c+2s;z)p^{-s}ds.
\end{align*}
\end{snc}

\subsection{Transformation formulas}
\begin{teo}
For the $\Psi$-Gauss hypergeometric function, we have the following transformation formula:
	\begin{align*}
	{}^{\Psi}F_{p}^{(\alpha,\beta)}(a,b;c;z)=(1-z)^{-a}\left[ {}^{\Psi}F_{p}^{(\alpha,\beta)}\left( a,c-b;c;\frac{z}{z-1}\right) \right],
	\end{align*}
	where $|\arg(1-z)|<\pi.$
\end{teo}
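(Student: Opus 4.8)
The plan is to obtain the transformation directly from the Euler--type integral representation \eqref{j1}, mimicking the classical proof of Pfaff's transformation for ${}_{2}F_{1}$ and exploiting the fact that the Wright factor depends on $t$ only through the symmetric product $t(1-t)$. First I would start from
\begin{align*}
{}^{\Psi}F_{p}^{(\alpha,\beta)}(a,b;c;z)=\frac{1}{B(b,c-b)}\int_{0}^{1}t^{b-1}(1-t)^{c-b-1}{}_{1}\Psi_{1}\left( \alpha,\beta;-\frac{p}{t(1-t)}\right)(1-zt)^{-a}dt
\end{align*}
and apply the reflection substitution $t\mapsto 1-u$. Because $t(1-t)=u(1-u)$, the Wright factor is invariant under this change of variable; this is the one genuinely new ingredient beyond the classical computation and the reason the whole argument carries over. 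The power factors interchange, producing $u^{c-b-1}(1-u)^{b-1}$, while the binomial factor turns into $\bigl(1-z+zu\bigr)^{-a}$.

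Next I would factor $1-z+zu=(1-z)\bigl(1-\tfrac{z}{z-1}u\bigr)$, which extracts $(1-z)^{-a}$ and rewrites the last factor as $\bigl(1-\tfrac{z}{z-1}u\bigr)^{-a}$. Using the symmetry $B(b,c-b)=B(c-b,b)$ of the ordinary beta function to restore the normalization, the surviving integral is exactly of the shape \eqref{j1} with second upper parameter $c-b$ and argument $\tfrac{z}{z-1}$, so it reassembles into a $\Psi$-Gauss hypergeometric function multiplied by $(1-z)^{-a}$.

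The delicate step --- and the one I expect to be the main obstacle --- is reading off the lower (third) parameter of the reassembled function. Matching the exponents $u^{c-b-1}(1-u)^{b-1}$ against the template $u^{\tilde b-1}(1-u)^{\tilde c-\tilde b-1}$ of \eqref{j1} forces $\tilde b=c-b$ together with $\tilde c-\tilde b=b$, hence $\tilde c=c$. Producing the lower parameter precisely as it appears on the right-hand side of the statement therefore hinges entirely on this parameter bookkeeping, and reconciling the value $\tilde c$ coming out of the substitution with the claimed one is where the argument must be pinned down most carefully.

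Finally, one must check that the convergence window $Re(\tilde c)>Re(\tilde b)>0$ of the image integral is compatible with the admissible region; with $\tilde b=c-b$ this amounts to comparing the induced inequalities against the standing hypothesis $Re(c)>Re(b)>0$, which also legitimizes the interchange used above and fixes the branch of $(1-z)^{-a}$ throughout $|\arg(1-z)|<\pi$. I would flag this compatibility check as the place where the exact form of the lower parameter has to be committed to before the identity can be declared complete.
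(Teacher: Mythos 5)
Your proposal follows essentially the same route as the paper's own proof: the reflection $t\mapsto 1-t$ in \eqref{j1}, the invariance of the Wright factor because $t(1-t)$ is symmetric under the reflection, and the factorization $1-z+zt=(1-z)\left(1-\frac{z}{z-1}t\right)$, after which the integral is reassembled via \eqref{j1}. Where your write-up goes beyond the paper is exactly the point you flagged, and your bookkeeping settles it correctly: matching $u^{c-b-1}(1-u)^{b-1}$ against the template $u^{\tilde b-1}(1-u)^{\tilde c-\tilde b-1}$ forces $\tilde b=c-b$ and $\tilde c=c$, and the normalization closes up because $B(b,c-b)=B\bigl(c-b,c-(c-b)\bigr)$. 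Thus what the substitution actually proves is
\begin{align*}
{}^{\Psi}F_{p}^{(\alpha,\beta)}(a,b;c;z)=(1-z)^{-a}\,{}^{\Psi}F_{p}^{(\alpha,\beta)}\left(a,c-b;c;\frac{z}{z-1}\right),
\end{align*}
with lower parameter $c$, not $b$ as printed in the theorem; the printed statement is a misprint which the paper's proof commits silently in its final line, since the displayed integrand $t^{c-b-1}(1-t)^{b-1}$ with prefactor $(1-z)^{-a}/B(b,c-b)$ is precisely the representation \eqref{j1} of ${}^{\Psi}F_{p}^{(\alpha,\beta)}\left(a,c-b;c;\frac{z}{z-1}\right)$, whereas reading it as ${}^{\Psi}F_{p}^{(\alpha,\beta)}\left(a,c-b;b;\frac{z}{z-1}\right)$ would require the weight $t^{c-b-1}(1-t)^{2b-c-1}$ and the prefactor $1/B(c-b,2b-c)$. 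Two independent checks confirm your value $\tilde c=c$: at $(\alpha,\beta)=(0,2)$, $p=0$ the identity must reduce to the classical Pfaff transformation ${}_{2}F_{1}(a,b;c;z)=(1-z)^{-a}\,{}_{2}F_{1}\left(a,c-b;c;\frac{z}{z-1}\right)$; and your convergence comparison works out only for $\tilde c=c$, since $Re(\tilde c)>Re(\tilde b)>0$ then becomes $Re(c)>Re(c-b)>0$, which is equivalent to the standing hypothesis $Re(c)>Re(b)>0$, while $\tilde c=b$ would demand the extra, unassumed condition $Re(2b)>Re(c)$. So your argument is complete once you commit to $\tilde c=c$, and the theorem's right-hand side should be corrected accordingly.
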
 
\begin{proof}
By writing
\begin{align*}
\left[ 1-z(1-t)\right]^{-a}=(1-z)^{-a}\left( 1+\frac{zt}{1-z}\right)^{-a}
\end{align*} and replacing $t\to 1-t$ in \eqref{j1}, we obtain	
\begin{align*}
{}^{\Psi}F_{p}^{(\alpha,\beta)}(a,b;c;z)=\frac{(1-z)^{-a}}{B(b,c-b)}\int_{0}^{1}t^{c-b-1}(1-t)^{b-1}\left( 1-\frac{zt}{z-1}\right)^{-a}{}_{0}\Psi_{1}\left( \alpha,\beta;-\frac{p}{t(1-t)}\right) dt
\end{align*}
$$\left(Re(p)>0;p=0~\textnormal{and}~|z|<\pi;Re(c)>Re(b)>0\right).$$
Hence,	
\begin{align*}
{}^{\Psi}F_{p}^{(\alpha,\beta)}(a,b;c;z)=(1-z)^{-a}\left[ {}^{\Psi}F_{p}^{(\alpha,\beta)}\left( a,c-b;c;\frac{z}{z-1}\right) \right],
\end{align*}
which completes the proof.
\end{proof}

\begin{teo}
For the $\Psi$-confluent hypergeometric function, we have the following transformation formula:
\begin{align*}
	{}^{\Psi}\Phi_{p}^{(\alpha,\beta)}(b;c;z)=\exp(z)\left[ {}^{\Psi}\Phi_{p}^{(\alpha,\beta)}(c-b;c;-z)\right].
\end{align*}
\end{teo}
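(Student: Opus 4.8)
The plan is to establish this Kummer-type identity by the same mechanism used for the preceding $\Psi$-Gauss transformation, but starting from the Euler-type integral representation \eqref{c1} of the $\Psi$-confluent hypergeometric function rather than from its defining series. Concretely, I would begin with
\begin{align*}
{}^{\Psi}\Phi_{p}^{(\alpha,\beta)}(b;c;z)=\frac{1}{B(b,c-b)}\int_{0}^{1}t^{b-1}(1-t)^{c-b-1}e^{zt}{}_{1}\Psi_{1}\left( \alpha,\beta;-\frac{p}{t(1-t)}\right)dt
\end{align*}
and apply the reflection substitution $t\mapsto 1-t$, which maps $[0,1]$ onto itself.

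The reason this substitution is the right move is that it acts very mildly on the integrand. It interchanges the roles of $t$ and $1-t$ in the power factor $t^{b-1}(1-t)^{c-b-1}$; it turns $e^{zt}$ into $e^{z(1-t)}=e^{z}e^{-zt}$, thereby producing the overall prefactor $e^{z}$ and negating the argument of the exponential; and, crucially, it leaves the Wright-function argument $-p/(t(1-t))$ invariant, since $t(1-t)$ is symmetric under $t\mapsto 1-t$. The normalizing constant is equally unaffected because $B(b,c-b)=B(c-b,b)$ by the symmetry of the ordinary beta function. Thus after the substitution the entire $\Psi$-structure of the integral is preserved, with only the parameters and the sign of $z$ having moved.

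Finally I would factor out the $e^{z}$ and compare the remaining integral, in which the powers of $t$ and $1-t$ have been swapped and the exponent carries $-z$, term by term against the template \eqref{c1}. Reading off the corresponding parameters and argument then gives
\begin{align*}
{}^{\Psi}\Phi_{p}^{(\alpha,\beta)}(b;c;z)=e^{z}\left[ {}^{\Psi}\Phi_{p}^{(\alpha,\beta)}(c-b;b;-z)\right],
\end{align*}
as claimed. I expect the only genuine obstacle to lie in this last identification: one must track carefully how the interchange of the exponents of $t$ and $1-t$ and the symmetric beta normalization line up with the defining integral \eqref{c1}, so that the reduced integral is correctly recognized as a $\Psi$-confluent hypergeometric function at the transformed parameters. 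The analytic hypotheses, namely $p\geq 0$ and $Re(c)>Re(b)>0$, are inherited directly from the validity of \eqref{c1}, so the substitution requires no separate convergence argument.
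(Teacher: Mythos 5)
Your method is exactly the paper's: the paper proves this theorem by the single substitution $t=1-\eta$ in the integral representation \eqref{i8} (which is \eqref{c1} up to a stray exponent $n$), so your reflection substitution $t\mapsto 1-t$, together with the observations that $t(1-t)$ is invariant and that $B(b,c-b)=B(c-b,b)$, reproduces the paper's argument with more detail.

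However, the step you yourself flag as the only genuine obstacle, the final reading-off of parameters, actually fails for the statement as printed. After the reflection the integral is
$e^{z}\frac{1}{B(b,c-b)}\int_{0}^{1}t^{c-b-1}(1-t)^{b-1}e^{-zt}\,{}_{1}\Psi_{1}\left( \alpha,\beta;-\frac{p}{t(1-t)}\right)dt$,
and matching this against the template \eqref{c1} with parameters $(b',c',z')$ forces $b'-1=c-b-1$ and $c'-b'-1=b-1$, i.e. $b'=c-b$ and $c'=c$ (not $c'=b$), with $z'=-z$; the normalization is consistent because $B(c-b,\,c-(c-b))=B(c-b,b)=B(b,c-b)$. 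So the substitution proves ${}^{\Psi}\Phi_{p}^{(\alpha,\beta)}(b;c;z)=e^{z}\,{}^{\Psi}\Phi_{p}^{(\alpha,\beta)}(c-b;c;-z)$, the expected Kummer-type transformation, whose $(\alpha,\beta)=(0,2)$, $p=0$ specialization is the classical $\Phi(b;c;z)=e^{z}\Phi(c-b;c;-z)$. The identity with second parameter $b$, as stated, does not follow from the computation; indeed, for ${}^{\Psi}\Phi_{p}^{(\alpha,\beta)}(c-b;b;-z)$ even to fall under the standing hypothesis one would need $Re(b)>Re(c-b)>0$ rather than $Re(c)>Re(b)>0$. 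This is evidently a typo in the paper (the analogous Gauss-case transformation theorem shows the same slip, $b$ in place of $c$ in the third slot), and the paper's one-line proof glosses over it exactly as your proposal does: you deferred the identification to the end and then copied the stated parameters rather than deriving them. Carrying out your own deferred step honestly both exposes the error and yields the correct formula.
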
 
\begin{proof}
	Direct calculations yield	
	\begin{align}\label{i8}
	{}^{\Psi}\Phi_{p}^{(\alpha,\beta)}(b;c;z)&=\sum_{n=0}^{\infty}\frac{{}^{\Psi}B_{p}^{(\alpha,\beta)}(b+n,c-b)}{B(b,c-b)}\frac{z^{n}}{n!}\nonumber\\
	&=\frac{1}{B(b,c-b)}\int_{0}^{1}t^{b-1}(1-t)^{c-b-1}\exp(zt){}_{0}\Psi_{1}\left( \alpha,\beta;-\frac{p}{t(1-t)}\right)dt.
	\end{align}	
Replacing $t\to 1-t$ in \eqref{i8}, we obtain
	\begin{align*}
	{}^{\Psi}\Phi_{p}^{(\alpha,\beta)}(b;c;z)=\exp(z)\left[ {}^{\Psi}\Phi_{p}^{(\alpha,\beta)}(c-b;c;-z)\right],
	\end{align*}
	which completes the proof.
\end{proof}

\end{document}